\begin{document}
\title[elastodynamics model]{Initial boundary value problem for a nonconservative
system in elastodynamics}
\author[Kayyunnapara  Divya Joseph and  P. A  Dinesh]{Kayyunnapara  Divya Joseph and  P. A  Dinesh}
\address{
Kayyunnapara  Divya Joseph and P.A. Dinesh \newline
Department of Mathematics\\
MSRIT P. O\\
Bangalore 560054, India}
\email{divyakj@msrit.edu, dineshdpa@msrit.edu}
\thanks{ submitted }
\subjclass[2000]{35A20, 35L50,35R05}
\keywords{elastodynamics, viscous shocks}

\begin{abstract}
This paper is concerned with the initial boundary value
problem for a nonconservative system of hyperbolic
equation appearing in elastodynamics in the space time domain $x>0,t>0$.
The number of boundary conditions to be prescribed at the boundary $x=0$,
depend on the number of characteristics entering the domain. Since our system is nonlinear
the characteristic speeds depends on the unknown and the
direction of the characteristics curves are known apriori. As it is well known, 
the boundary condition has to be understood in a generalised
way. One of the  standard way is using vanishing viscosity method.
We use this method to construct solution for a particular class of initial and boundary data,
namely the initial and boundary datas that lie on the level sets of one of the Riemann invariants.
\end{abstract}

\maketitle
\numberwithin{equation}{section}
\numberwithin{equation}{section}
\newtheorem{theorem}{Theorem}[section]
\newtheorem{remark}[theorem]{Remark}

\section{Introduction} 
In this article we study a  system of two equations, appearing in elastodynamics,  namely,
\begin{equation}
\begin{gathered}
\frac{\partial u}{\partial t }+ u \frac{\partial u}{\partial x} - \frac{\partial{\sigma}}{\partial x} =
0,\\
\frac{\partial{\sigma}}{\partial t} + u \frac{\partial{\sigma}}{\partial x} - k^2
\frac{\partial{u}}{\partial x} = 0.
\end{gathered}
\label{e1.1}
\end{equation}
Here $u$ is the velocity, $\sigma$ is the stress and $k>0$
is the speed of propagation of the elastic waves and the system corresponds to the coupling of a dynamical law and a Hooke's law in a homogeneous medium in which density varies slightly in the neighborhood of a constant value.
Infact, this system was derived in \cite{c1} as a simplification of 
a system of four equations  of elasticity describing mass conservation, momentum conservation, energy conservation and Hooke's law.

Initial value problem for  the system \eqref{e1.1} 
is well studied by many authors \cite{c1,c2,j2,j3}, however the boundary value problem is not understood well. The aim of this paper is to study initial boundary value problem for \eqref{e1.1}, in $x>0,t>0$ with initial 
conditions
\begin{equation}
( u(x,0),\sigma (x, 0) ) = (u_0(x) , \sigma _0(x) ),\,\,\,
x > 0
\label{e1.2}
\end{equation}
and a weak form of the  boundary conditions,
\begin{equation}
( u(0,t),\sigma (0,t) ) = (u_b(t) , \sigma _b(t)
),\,\,\,t>0.
\label{e1.3}
\end{equation}
We write \eqref{e1.1} as the system
\begin{equation*}
\begin{bmatrix}
u\\  
\sigma
\end{bmatrix} 
_t +
\begin{bmatrix}
u & -1\\
-k^2 & u
\end{bmatrix}
\begin{bmatrix}
u \\
\sigma
\end{bmatrix} 
_x
=0
\end{equation*}
Let 
\begin{equation*}
A(u,\sigma) =
\begin{bmatrix}
u & -1\\
-k^2 & u
\end{bmatrix}
\end{equation*}
The system \eqref{e1.1} is  non-conservative because
it cannot be written in the conservative form, namely
\begin{equation*}
\frac {\partial u}{\partial t} + \frac {\partial f_1(u, \sigma)}{\partial x} = 0, \,\,\,
\frac {\partial \sigma}{\partial t} + \frac {\partial f_2(u, \sigma)}{\partial x} = 0.
\end{equation*}
In other words there does not exists $f: R^2 \rightarrow
R^2$ such that
\begin{equation*}
A(u,\sigma) =\frac{\partial( f_1,f_2)}{\partial
(u,\sigma)}
\end{equation*}
where $f(u,\sigma)=(f_1(u,\sigma),f_2(u,\sigma))$.
The eigenvalues of $A(u,\sigma)$ are called the
characteristic speeds of system \eqref{e1.1}. A simple
computation
shows that, the equation for the eigenvalues are given
by\begin{equation*}
\det (\lambda I -
\begin{bmatrix}
u & -1\\
-k^2 & u
\end{bmatrix} )
= 0
\end{equation*}
This gives
\begin{equation*}
(\lambda - u )^2  - k^2 =0
\end{equation*}
which has two real distinct roots and so the system is strictly hyperbolic. We call $\lambda_1$ the
smaller one and $\lambda_2$ larger one, then we have
\begin{equation}
\lambda_1 = u-k ,\,\,\,\lambda_2 = u + k.
\label{e1.4}
\end{equation} 
The curves defined by $\frac{dx}{dt}= \lambda_1$ and  $\frac{dx}{dt}= \lambda_2$ are called  the $1-$characteristic curve and $2-$characteristic curve respectively. 
Call $\ r_1 $ and $\ r_2 $ right eigenvectors
corresponding to $ \lambda_1 $ and $ \lambda_2 $
respectively. An
 easy computation shows
 \begin{equation*}
r_1 = r_1(u,\sigma)=(1,k),\,\,\,r_2= r_2 (u,
\sigma)=(1,-k)
\end{equation*}
We say $w_i :R^2 \rightarrow R$ is a  $i$-Riemann
invariant if 
\begin{equation*}
Dw_{i}(u,v) . r_i(u,\sigma)=0, 
\end{equation*}
where $ D w_i(u,v)=(w_{iu},w_{i \sigma})$, is the  the gradient of $w_i$ w.r.t. $(u,\sigma)$ and $r_i(u,\sigma)$   is a right eigen vector corresponding to the $i$-th characteristic speed 
$\lambda_i(u,\sigma)$.
First we prove few basic properties of Riemann invariants. For the special system that we are studying , these two families of Riemann invariants  can be constructed 
globally and they play an important role in our analysis. \\

{\bf Lemma: } The functions $w_1$ and $w_2$ given by 
\begin{equation}
w_1(u,\sigma) =\sigma - k u,\,\,\,
w_2(u,\sigma) =\sigma + k u 
\label{e1.5}
\end{equation}
are $1-$ Riemann invariant and $2-$Riemann invariant
respectively. Further $w_1$ is constant along $2-$ characteristic curve
and $w_2$ is constant along $1-$ characteristic curve.\\
\begin{proof}:
By the definition , $w_1(u,\sigma)$ is a $1-$ Riemann invariant
if
$w_{1 u} + k w_{1 \sigma} = 0$ and $w_2(u,\sigma)$ $2-$ Riemann invariant if
$w_{2 u} -k w_{2 \sigma} = 0$.
Now  $w_{1  u} + k w_{1 \sigma} = 0$ gives
$w_1(u,\sigma)=\phi_1(\sigma - k u) $, where $\phi_1 :R^1
\rightarrow R^1$ is arbitrary smooth function.
whereas $w_{2 u} -k w_{2 \sigma} = 0$. gives
$w_2(u,\sigma)=\phi_2(\sigma + k u)$, $\phi_2:R^1\rightarrow
R^1$. In particular taking $\phi(w)=w$, we get the pair of Riemann invariants as given in Lemma.
 An easy computation shows that
\begin{equation*}
{w_1}_t= \sigma_t - k u_t ,\,\,\,
{w_1}_x= \sigma_x - k u_x ,\,\,\,
{w_2}_t= \sigma_t + k u_t ,\,\,\,
{w_2}_x= \sigma_x + k u_x 
\end{equation*}
This together with \eqref{e1.1}, after rearranging rearranging the terms we get ,
\begin{equation*}
\begin{gathered}
\frac{\partial w_1}{ \partial t} + \lambda_2 \frac{
\partial w_1}{ \partial x} =
\sigma_t - k u_t + (u+ k)( \sigma_x - k u_x)\\ 
=( \sigma_t + u \sigma_x - k^2 u_x)- k(u_t +u u_x - \sigma_x )= 0,\\
\frac{\partial w_2 }{\partial t} + \lambda_1 \frac{
\partial w_2 }{ \partial x} = 
\sigma_t + k u_t + (u- k)( \sigma_x + k u_x) \\
=( \sigma_t + u \sigma_x - k^2 u_x)+ k(u_t +u u_x - \sigma_x )= 0.
\end{gathered}
\end{equation*}
In short,
\begin{equation*}
\frac{\partial w_1}{ \partial t} + \lambda_2 \frac{
\partial w_1}{ \partial x} = 0, \qquad
\frac{\partial w_2 }{\partial t} + \lambda_1 \frac{
\partial w_2 }{ \partial x} = 0.
\end{equation*}

Let $x_i=\beta_i(t)$ be an $i$-characteristic curve . We have
\begin{equation*}
\begin{gathered}
\frac{d w_1}{d t}(\beta_2(t),t)= \frac{\partial w_1}{\partial t} + \frac{\partial \beta_2}{\partial t} \frac{\partial w_1}{\partial x} = \frac{\partial w_1}{\partial t} + \frac{d x_2}{dt} \frac{\partial w_1}{\partial x} =  \frac{\partial w_1}{\partial t} + \lambda_2  \frac{\partial w_1}{\partial x} = 0 , \\
\frac{d w_2}{ d t}(\beta_1(t),t)= \frac{\partial w_2}{\partial t} + \frac{\partial \beta_1}{\partial t} \frac{\partial w_2}{\partial x} = \frac{\partial w_2}{\partial t} + \frac{d x_1}{dt} \frac{\partial w_2}{\partial x} =  \frac{\partial w_2}{\partial t} + \lambda_1 \frac{\partial w_2}{\partial x} = 0 . \\
\end{gathered}
\end{equation*}
which proves the lemma.
\end{proof}

\section{Initial boundary value problem for the linearized system}

In this section we study initial boundary value problem for the linearized system of \eqref{e1.1} around a constant state $(\bar{u},\bar{\sigma})$. Let
\[
u=\bar{u} +\epsilon u(x,t) +...,\,\,\,\sigma(x,t)=\bar{\sigma} +\epsilon \sigma(x,t) +...
\]
where $(\bar{u},\bar{\sigma})$ is constant. Substituting this in\eqref{e1.1} and equating  powers of $\epsilon$
we get
waves, namely,

\begin{equation}
\begin{gathered}
\frac{\partial u}{\partial t }+ \bar{u} \frac{\partial u}{\partial x} - \frac{\partial{\sigma}}{\partial x} =
0,\\
\frac{\partial{\sigma}}{\partial t} + \bar{u} \frac{\partial{\sigma}}{\partial x} - k^2
\frac{\partial{u}}{\partial x} = 0,
\end{gathered}
\label{e2.1}
\end{equation}

Writing this in Riemann invariants we get
\begin{equation}
\frac{\partial w_1}{ \partial t} + (\bar{u}+k) \frac{
\partial w_1}{ \partial x} = 0 \qquad
\frac{\partial w_2 }{\partial t} + (\bar{u}-k) \frac{
\partial w_2 }{ \partial x} = 0
\label{e2.2}
\end{equation}
Whose general solution is
\[
w_1(x,t)=\phi_1(x-(\bar{u}+k)t),\,\,\,w_2(x,t)=\phi_2(x-(\bar{u}-k)t)
\]
for any functions $\phi_i: R \rightarrow R^2, i = 1, 2$.

Now if we consider the inirial boundary value problem in $x>0,t>0$,
with initial condition
\[
u(x,0)=u_0(x),\,\,\,\sigma(x,0)=\sigma_0(x)
\]
and boundary condition
\[
u(0,t)=u_b(t),\,\,\,\sigma(0,t)=\sigma_b(t).
\]
In  Riemann invariant they take the form 
\begin{equation*}
w_1(x,0)=w_{10}(x)=\sigma_0(x) - ku_0(x) ,\qquad
w_2(x,0)=w_{20}(x)=\sigma_0(x) + ku_0(x)
\end{equation*}
and
\begin{equation*}
w_1(0,t)=w_{1b}(t)=\sigma_b(t)-ku_b(t) ,\qquad
w_2(0,t)=w_{2b}(t)=\sigma_b(t)+ku_b(t)
\end{equation*}
respectively.
The boundary conditions at $x=0$, can be prescribed only if  both  $\bar{u}+k$ and $\bar{u}-k$ are
positive. 

To understand the situation, consider the equation
\[
 a _t + \lambda a_x = 0
 \]
where $ \lambda $ is constant. The general solution is to
this PDE is $ a = \phi (x - \lambda t) $ which is the
unique solution of this PDE with
initial condition $a(x,0)=\phi(x)$.
Now  if we look at the
initial boundary value problem in the quarter plane
$x>0,t>0$, initial data at $t=0$ uniquely
determine the solution in $x>0,t>0$ if $\lambda<0$ because
any characteristic curve through $(x,t), x>0,t>0$ can be
drawn back to the initial line i.e the $x$-axis
at a point $y=x-at$ and $u(x,t)=u(x-at,0)$. So no boundary
conditions are required at $x=0$.  On the other hand,
if $\lambda > 0$ , then all the characteristic lines at $x
= 0$ are pointing inwards the domain $x>0,t>0$, and then the initial data , $a(x,0)=a_0(x), x>0$ determine the solution 
in the region $x>\lambda t$.  The boundary condition $a(0,t)=a_b(t)$, is required to determine the solution
in $0<x<\lambda t$. So $a(0, t)$ and $a(x, 0)$ should be given to determine the solution in $x>0,t>0$.
In fact we can write down the solution explicitly.

{\bf Case 1 $\lambda <  0$ :} Here no boundary condition is required.
The characteristic curve is  $x = \lambda t +y$ ,
$a(x,t)= a_0(x-\lambda t)$  \\

{\bf Case 2: $\lambda > 0$ : } 
Boundary condition $a(0,t)=a_b(t)$  is required.The characteristic curve is  $x = \lambda t +y$ .

If $x\geq\lambda t$ , we have $a(x,t)=a_0(x-\lambda t)$.
If $x<\lambda t$ , $a(x,t)=a(0,\tau)$ where $(0,\tau)$ is the point of intersection of the characteristic passing through $(x,t)$ with slope $\lambda$ and the $t$-axis.
So
$a(x,t)=a_b(\tau)=a_b(t-(x/\lambda))$
\begin{equation}
a(x,t)=\begin{cases}
a_0(x-\lambda t) &  \text{if  }  x\geq \lambda t, \\
a_b(t-(x/\lambda)) &  \text{if  }   x< \lambda t
\end{cases}
\label{e2.3}
\end{equation}

Now getting back to the system \eqref{e2.1}, or equivalently \eqref{e2.2}, the following cases occur.\\
{\bf Case 1:  $\bar{u}+k <0$ :} . In this case the initial data  $w_1(x,0)=w_{10}(x)$,  $w_2(x,0)=w_{20}(x)$
determine the solution and no conditions are required on $x = 0$ and explicit solution in $x>0,t>0$ is given by
\[
w_1(x,t)= w_{10}(x-(\bar{u}+k)t ) \qquad
w_2(x,t)= w_{20}(x-(\bar{u} -k)t )
\]

{\bf Case 2:  $\bar{ u}- k < 0$ and $\bar{u} + k >0$ :} Here, apart from the initial conditions as above,  one condition is required. This is of the form 
\begin{equation*}
\alpha u(0,t) +\beta \sigma(0,t) =\gamma(t), 
\end{equation*}
where $\gamma(t)$ is a given function of $t$ and $\alpha , \beta$ are constants with $(k\beta - \alpha) \ne 0$.
In Riemann invariants, this is equivalent to 
\begin{equation*}
  \alpha \frac{(w_2-w_1)(0,t)}{2k} +\beta \frac{(w_1+w_2)(0,t)}{2} = \gamma(t). 
\end{equation*}

So, we have
\[
(k \beta + \alpha )w_2(0,t)+  (k \beta - \alpha )w_1(0,t)=2k \gamma(t).
\] 
Since $\bar{ u}- k < 0$ as before,
\begin{equation*}
w_2(x,t)= w_{20}(x-(\bar{u} -k)t ) , x>0, t>0.
\end{equation*}
and from \eqref{e2.3} 
\begin{equation*}
w_1(x,t)=\begin{cases}
 w_{10}(x-(\bar{u}+k)t ),&\text{if } x\geq (\bar{u}+ k)t \\
w_1(0,t-(\frac{x}{\bar{u}+k})),&\text{if } x< (\bar{u}+ k)t \\
\end{cases}
\end{equation*}
which imples
\begin{equation*}
w_1(0,t -( \frac{x}{ \bar{u} +k }))= 
 \frac{2 k }{(k\beta - \alpha)}2 k \gamma (t - ( \frac{x}{ \bar{u} +k })) - \frac{ (k \beta + \alpha) }{(k\beta - \alpha)}w_{20}( ( k - \bar{u})(t - ( \frac{x}{ \bar{u} +k }))
\end{equation*}
Rewriting we get
\begin{equation*}
w_1(x,t)= 
\begin{cases}
 w_{10}(x-(\bar{u}+k)t ) & \text{if } x\geq(\bar{u}+ k)t \\
 \frac{2 k }{(k\beta - \alpha)}2 k \gamma (t - ( \frac{x}{ \bar{u} +k })) - \frac{ (k \beta + \alpha) }{(k\beta - \alpha)}w_{20}( ( k - \bar{u})(t - ( \frac{x}{ \bar{u} +k })) & \text{if } x<(\bar{u}+ k)t , \\
\end{cases}
\end{equation*}
\begin{equation*}
w_2(x,t)= w_{20}(x-(\bar{u} -k)t ) .
\end{equation*}

{\bf Case 3: 
$\bar{u} - k > 0.$ } This implies $\bar{u} + k >0.$
So, the conditions $\sigma(0,t) = \sigma_b(t)$, $u(0,t)=u_b(t)$ should be prescribed.
More generally one can prescribe,
\begin{equation}
\alpha_{11} u(0,t) +\beta_{11} \sigma(0,t) =\gamma_1(t) ,\,\,\,
\alpha_{22} u(0,t) +\beta_{22} \sigma(0,t) =\gamma_2(t)
\label{e2.4} 
\end{equation}
with $\det 
\begin{bmatrix}
\alpha_{11} &  \beta_{11} \\
\alpha_{22} &   \beta_{22}
\end{bmatrix}
 \neq 0$. Now,
\eqref{e2.4}
in Riemann invariants is equivalent to
\begin{equation}
(\alpha_{11}+k\beta_{11} )w_2(0,t) +(k\beta_{11}-\alpha_{11}) w_1(0,t) =2k \gamma_1(t)
\label{e2.5} 
\end{equation}
and
\begin{equation}
(\alpha_{22}+k\beta_{22} )w_2(0,t) +(k\beta_{22}-\alpha_{22}) w_1(0,t) =2k \gamma_2(t).
\label{e2.6}
\end{equation}

From \eqref{e2.3}, we have
\begin{equation*}
w_1(x,t)=\begin{cases}
 w_{10}(x-(\bar{u}+k)t ),&\text{if } x\geq (\bar{u}+ k)t \\
w_1(0,t-(\frac{x}{\bar{u}+k}),&\text{if } x< (\bar{u}+ k)t \\
\end{cases}
\end{equation*}
and
\begin{equation*}
w_2(x,t)=\begin{cases}
 w_{20}(x-(\bar{u}-k)t ),&\text{if } x\geq (\bar{u}- k)t \\
w_2(0,t-(\frac{x}{\bar{u}-k}),&\text{if } x< (\bar{u}- k)t. \\
\end{cases}
\end{equation*}
Note that \eqref{e2.5} and \eqref{e2.6} can be written as,
\begin{equation*}
\begin{bmatrix}
\alpha_{11}+k\beta{11} &  k\beta_{11}-\alpha{11} \\
\alpha_{22}+k\beta{22} &   k\beta_{22}-\alpha{22}
\end{bmatrix}
\begin{bmatrix}
w_2(0,t) \\
w_1(0,t)
\end{bmatrix}
=
\begin{bmatrix}
2k \gamma_1(t) \\
2k \gamma_2(t),
\end{bmatrix}
\end{equation*}
which can be solved if
\begin{equation*}
det 
\begin{bmatrix}
\alpha_{11}+k\beta_{11} &  k\beta_{11}-\alpha_{11} \\
\alpha_{22}+k\beta_{22} &   k\beta_{22}-\alpha_{22}
\end{bmatrix}
\neq 0.
\end{equation*}
An easy computation shows that this is equivalent to,
\begin{equation*}
(\alpha_{11}+k\beta_{11})  ( k\beta_{22}-\alpha_{22}) - (\alpha_{22}+k\beta_{22}) ( k\beta_{11}-\alpha_{11}) 
\neq 0
\end{equation*}
which is equivalent to,
\begin{equation*}
2k (\alpha_{11} \beta_{22} -  \beta_{11}\alpha_{22}) \neq 0,
\end{equation*}
i.e
\begin{equation*}
\alpha_{11} \beta_{22} -  \beta_{11}\alpha_{22} \neq 0,
\end{equation*}
which we have assumed earlier.
A special Case is
$\alpha_{11}=1, \beta_{11}=0$
$\alpha_{22}=0, \beta_{11}=1$
which is the Dirichlet's Boundary condition
 $u(0,t)=\gamma_1(t),
\sigma(0,t)=\gamma_2(t).$

\section{Initial boundary value problem when the data is
on the level set of Riemann invariants}

 The system \eqref{e1.1} is nonconservative and strictly
hyperbolic with characteristic speeds
$\lambda_1(u,\sigma) = u - k, \lambda_2(u,\sigma) = u + k$
and it is well known that smooth global in time
solutions does not exist even if the initial data
is smooth. Since the system is nonconservative any
discussion of well-posedness of solution should be based
on a given nonconservative product in addition to the
admissibility criterion for shock discontinuities.
Indeed the system is an approximation and is obtained when
one ignores higher order terms, which give smoothing
effects with
small parameters as coefficients . So the physical
solution is constructed as the limit of a given
regularization as these parameters
goes to zero. Different regularizations correspond to
different nonconservative product and admissibility
condition. In this paper we take a parabolic
regularization, 
\begin{equation}
\begin{gathered}
u_t + u u_x - \sigma_x = \epsilon u_{xx},\\
\sigma_t + u \sigma_x - k^2  u_x = \epsilon \sigma_{xx}.
\end{gathered}
\label{e3.1}
\end{equation}
This approximation is particularly useful in the study of the initial boundary value problem for \eqref{e1.1} in $x>0,t>0,$ as we cannot prescribe  initial and boundary conditions,
\begin{equation}
u(0,t)=u_b(t), \sigma(0,t) = \sigma_b (t),u(x,0)= u_0(x), 
\sigma(x,0) = \sigma_0(x),
\label{e3.2}
\end{equation}
for \eqref{e1.1} in the strong sense.

In this section we consider the initial boundary value
problem for \eqref{e1.1} with initial condition \eqref{e1.2} and a weak 
form of the boundary condition \eqref{e1.3} when
initial and boundary data lie on the level set of one of
the Riemann invariants. We take the data
to be on the level set of the $j-$  Riemann invariant and therefore
\begin{equation} 
\sigma _0 (x)+(-1)^{j} k u_0(x) = c , \,\,\,\sigma _b(t) +(-1)^{j} k
u_b(t) = c
\label{e3.3}
 \end{equation} 
Throughout this section we assume that $u_0 , \sigma_0$ are bounded measurable and $u_b , \sigma_b$ are continuous.
We look for a solution of the form $ \sigma =-(-1)^{j} k u + c$
substituting this in \eqref{e1.1} we get 
\[
 u_t + u u_x -(-1)^{j+1} k u_x = 0,\,\,,\,
k( -(-1)^{j}) k u _t + u ( k(-(-1)^{j}) k u _x ) - k^2  u_x = 0 .
\]
This gives 
\[
 u_t + (u -(-1)^{j+1} k) u_x = 0. 
 \]
Making the substitution $v= u -(-1)^{j+1} k$ the problem
\eqref{e1.1} reduces to
\begin{equation}
v_t +v v_x = 0, 
 v( x, 0) = u_0 -(-1)^{j+1} k , 
 v(0,t) = u_b(t) -(-1)^{j+1} k .
\label{e3.4}
\end{equation}
The boundary condition $v( 0, t) = u_b(t) - k$
cannot be satisfied in general, in the strong sense as the
characteristics depends on the solution and at the boundary point the
characteristics may point out of the boundary when drawn
in increasing time direction. We
construct the solution $u(x,t)$ as the limit of
$u_\epsilon (x,t)$ as $\epsilon \rightarrow 0$ for \eqref{e3.1}
with initial and boundary conditions
\eqref{e3.2} 

To describe the vanishing viscosity limit, we need to introduce some notations.
First we introduce a functional
defined on a
certain  class of paths, that is described in the following way.
For each fixed $(x,y,t), x \geq 0, y \geq 0, t>0$,
$C(x,y,t)$ denotes the following class of paths $\beta$ inthe
quarter plane
$D=\{ (z,s) : z\geq 0, s \geq 0\}$. Each path is connected
from the point
$(y,0)$ to $(x,t)$ and is of the form $z=\beta(s)$, where
$\beta$ is a
piecewise linear function of maximum three lines. On
$C(x,y,t)$, we
define a functional
\begin{equation}
J(\beta) = -\frac{1}{2}\int_{\{s:\beta(s)=0\}}
((u_b(s)+(-1)^{j}k)^{+})^2 ds
+ \frac{1}{2} \int_{\{s:\beta(s) \neq 0\}} 
\left(\frac{d\beta(s)}{ds}\right)^2 ds.
\label{e3.5}
\end{equation}

We call $\beta_0$ is straight line path connecting $(y,0)$
and $(x,t)$
which does not touch the boundary $x=0$,
namely
$\{(0,t), t>0\}$, then let
\begin{equation}
 A(x,y,t)= J(\beta_0) = \frac{(x-y)^2}{2t}.
\label{e3.6}
\end{equation} 
Any $\beta \in C^{*}(x,y,t) = C(x,y,t)-\{\beta_0\}$
is made up of three pieces of lines
connecting $(y,0)$ to $(0,\tau_1)$ in the interior and
$(0,\tau_1)$ to
$(0,\tau_2)$ on the boundary and $(0,\tau_2)$ to $(x,t)$ in the
interior, or two pieces or one line.

For such curves, it can be easily seen that
\begin{equation}
J(\beta) = J(x,y,t,\tau_1,\tau_2) = 
-\int_{\tau_1}^{\tau_2}\frac{((u_b(s)+(-1)^{j}k)^{+})^2}{2}ds + 
\frac{y^2}{2 \tau_1} + \frac{x^2}{2(t-\tau_2)}.
\label{e3.7}
\end{equation}
For the case where $\tau_1 = 0,y=0,$
\begin{equation*}
J(\beta) = J(x,0,t,0,\tau_2) = 
-\int_{\tau_1}^{\tau_2}\frac{((u_b(s)+(-1)^{j}k)^{+})^2}{2}ds +  \frac{x^2}{2(t-\tau_2)}.
\end{equation*}

It was proved in \cite{j4} that there exists $\beta^{*}\in C^{*}(x,y,t)$
and corresponding $\tau_1(x,y,t),\tau_2(x,y,t)$ so that
\begin{equation}
\begin{aligned}
B(x,y,t)&= \min \{J(\beta) :\beta \in C^{*}(x,y,t)\}\\
&= \min \{J(x,y,t,\tau_1,\tau_2): \,\, 0\leq \tau_1 < \tau_2 < t\}\\
&=J(x,y,t,\tau_1(x,y,t),\tau_2(x,y,t)).
\end{aligned}
\label{e3.8}
\end{equation} 
is Lipshitz continuous. Further, it was also proved that,
\begin{equation}
\begin{aligned}
Q(x,y,t)&= \min\{J(\beta) : \beta \in C(x,y,t)\}\\
        & = \min \{A(x,y,t),B(x,y,t)\}
\end{aligned}
\label{e3.9}
\end{equation}
and
\begin{equation}
U(x,t)= \min \{Q(x,y,t) + U_0(y), \,\, 0\leq y< \infty\}
\label{e3.10}
\end{equation}
are Lipshitz continuous function in their variables,
where
\[
U_0(y)=\int_0^y u_0(z) dz +(-1)^{j} k y .
\]
  Further minimum in
\eqref{e3.10}
is attained at some value of $y\geq0$, which depends on
$(x,t)$, we
call it $y(x,t)$. If $A(x,y(x,t),t)\leq B(x,y(x,t),t)$
\begin{equation}
\begin{aligned}
U(x,t)= \frac{(x-y(x,t))^2}{2t} +U_0(y(x,t))
\end{aligned}
\label{e3.11}
\end{equation}
and if If $A(x,y(x,t),t)> B(x,y(x,t),t)$
\begin{equation}
U(x,t)=J(x,y(x,t),t,\tau_1(x,y(x,t),t),\tau_2(x,y(x,t),t))+U_0(y(x,t))
\label{e3.12}
\end{equation}

Here and hence forth $y(x,t)$ is a minimizer in
\eqref{e3.10} and in
the case of \eqref{e3.12}, $\tau_2(x,t)=\tau_2(x,y(x,t),t)$ and
$\tau_1(x,t)=\tau_1(x,y(x,t),t)$.
With these notations, we have an explicit formula for the
solutions of
\eqref{e1.1} and \eqref{e1.2} with a weak form of the boundary condition
\eqref{e1.3} ,as the vanishing viscocity limit of \eqref{e3.3}- \eqref{e3.4}
 given by the following result.

\begin{theorem} For every $(x,t),x>0,t>0$ minimum in
\eqref{e3.10}
is achieved by some $y(x,t)$ (may not be unique) and
$U(x,t)$ is a
Lipschitz continuous. Further the limit $\lim_{\epsilon}(u^\epsilon, \sigma^\epsilon)=(u(x,t),\sigma(x,t))$ exists and has the following representation. For almost every $(x,t),$ there exists unique
minimum $y(x,t)$
and either 
$A(x,y(x,t),t)< B(x,y(x,t),t)$ or
$B(x,y(x,t),t)<A(x,y(x,t),t)$. Define
\begin{equation}
p(x,t) = \begin{cases}
 \frac{x-y(x,t)}{t},&\text{if } A(x,y(x,t),t)< B(x,y(x,t),t),\\
 \frac{x}{t-t_1(x,t)},&\text{if } A(x,y(x,t),t)> B(x,y(x,t),t).
\end{cases}
\label{e3.13}
\end{equation}
Then 
\begin{equation}
u(x,t)=p(x,t)+(-1)^{j+1}k,\,\,\,
\sigma(x,t) = (-1)^{j+1} k(p(x,t)+(-1)^{j+1}k)+c 
\label{e3.14}
\end{equation}
Further $(u(x,t),\sigma(x,t))$ is a 
weak solution of 
of \eqref{e1.1}, with initial conditions 
\eqref{e1.2}.
\end{theorem}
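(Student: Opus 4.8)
The plan is to reduce the parabolic system \eqref{e3.1} with data \eqref{e3.2} to a single viscous Burgers equation on the quarter plane, to quote the explicit vanishing--viscosity representation for that scalar problem from \cite{j4}, and then to pass to the limit $\epsilon\to 0$ and carry the formula back to $(u,\sigma)$ through the level--set relation \eqref{e3.3}. The first and crucial step is to show that the level set $\{\sigma+(-1)^{j}ku=c\}$ is invariant under the viscous flow. Setting $w^{\epsilon}=\sigma^{\epsilon}+(-1)^{j}ku^{\epsilon}-c$ and differentiating using \eqref{e3.1}, one checks that the two convection terms recombine so that $w^{\epsilon}$ solves a linear advection--diffusion equation $w^{\epsilon}_{t}+b^{\epsilon}(x,t)\,w^{\epsilon}_{x}=\epsilon\,w^{\epsilon}_{xx}$ in $x>0,\,t>0$ with the locally bounded drift $b^{\epsilon}=u^{\epsilon}-(-1)^{j}k$, and that $w^{\epsilon}$ vanishes on $\{t=0\}$ and on $\{x=0\}$ by \eqref{e3.3}; the maximum principle forces $w^{\epsilon}\equiv 0$. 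Hence $\sigma^{\epsilon}=-(-1)^{j}ku^{\epsilon}+c$ for all time, and substituting this into the first equation of \eqref{e3.1} and setting $v^{\epsilon}=u^{\epsilon}-(-1)^{j+1}k$ turns \eqref{e3.1}--\eqref{e3.2} into exactly the viscous Burgers problem $v^{\epsilon}_{t}+v^{\epsilon}v^{\epsilon}_{x}=\epsilon\,v^{\epsilon}_{xx}$, $v^{\epsilon}(x,0)=u_{0}(x)-(-1)^{j+1}k$, $v^{\epsilon}(0,t)=u_{b}(t)-(-1)^{j+1}k$, i.e. the regularization of \eqref{e3.4}.

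Next I would invoke the analysis of the scalar initial boundary value problem in \cite{j4}. Via the Hopf--Cole transformation (writing $v^{\epsilon}=\phi^{\epsilon}_{x}$ for the potential and $\psi^{\epsilon}=e^{-\phi^{\epsilon}/2\epsilon}$ for a solution of the heat equation on the half line), the boundary value $v^{\epsilon}(0,t)=u_{b}(t)+(-1)^{j}k$ produces, after a Laplace--type asymptotic evaluation of the resulting representation as $\epsilon\to 0$, precisely the path functional $J$ of \eqref{e3.5} — the boundary term $-\frac{1}{2}\int_{\{\beta=0\}}((u_{b}(s)+(-1)^{j}k)^{+})^{2}ds$ being the contribution of the boundary datum of $v$ — together with the nested minimization \eqref{e3.8}--\eqref{e3.10}. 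From \cite{j4} I would take: $U$ is Lipschitz; the minimum in \eqref{e3.10} is attained at some $y(x,t)$; because the underlying Lagrangian is strictly convex, $U$ is semiconcave, so for a.e. $(x,t)$ the minimizer is unique and exactly one of the strict inequalities $A(x,y(x,t),t)<B(x,y(x,t),t)$, $B(x,y(x,t),t)<A(x,y(x,t),t)$ holds; and $v^{\epsilon}\to v$ a.e. and in $L^{1}_{\mathrm{loc}}$ with $v=p$ given by \eqref{e3.13}.

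Passing to the limit is then bookkeeping: from $u^{\epsilon}=v^{\epsilon}+(-1)^{j+1}k$ and $\sigma^{\epsilon}=(-1)^{j+1}ku^{\epsilon}+c$ (Step 1), the a.e. convergence $v^{\epsilon}\to p$ yields $u(x,t)=p(x,t)+(-1)^{j+1}k$ and $\sigma(x,t)=(-1)^{j+1}k(p(x,t)+(-1)^{j+1}k)+c$, which is \eqref{e3.14}. To see that $(u,\sigma)$ is a weak solution of \eqref{e1.1} with initial data \eqref{e1.2}, I would observe that along the level set every \emph{nonconservative} product in \eqref{e1.1} becomes an exact $x$--derivative of a function of $u$ alone: $uu_{x}=\partial_{x}(u^{2}/2)$, and since $\sigma_{x}=(-1)^{j+1}ku_{x}$ also $u\sigma_{x}=(-1)^{j+1}k\,\partial_{x}(u^{2}/2)$, so the distributional meaning is unambiguous for the $L^{\infty}_{\mathrm{loc}}$ (indeed BV) function $u$. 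Then \eqref{e1.1} reads, along the level set, as the conservation laws $u_{t}+\partial_{x}(u^{2}/2)-\sigma_{x}=0$ and $\sigma_{t}+(-1)^{j+1}k\,\partial_{x}(u^{2}/2)-k^{2}u_{x}=0$; both follow once $v=u-(-1)^{j+1}k$ is the Kruzhkov entropy solution of $v_{t}+\partial_{x}(v^{2}/2)=0$ on the quarter plane with the weak form of the boundary condition, which is exactly what the vanishing--viscosity limit of \cite{j4} delivers. The initial condition \eqref{e1.2} is recovered from the $t\to 0^{+}$ behaviour of the Lax--Oleinik formula ($y(x,t)\to x$, hence $p(x,t)\to u_{0}(x)-(-1)^{j+1}k$ at Lebesgue points), again from \cite{j4}.

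The hard part is concentrated in \cite{j4}: proving that the vanishing--viscosity limit of the scalar quarter--plane problem is given exactly by the double minimization (first over path geometry with the boundary sojourn interval $[\tau_{1},\tau_{2}]$, then over the entry point $y$), including the correct capture of the weak boundary condition and the Lipschitz and a.e. uniqueness regularity of $U$; I would cite these. Granting them, the only genuinely new obstacle here is Step 1 — the invariance of the level set under the parabolic regularization, which is special to this system and is precisely the structural fact making the construction go through — together with keeping the parity factors $(-1)^{j}$ straight when translating the scalar result back to the pair $(u,\sigma)$ and when verifying that the resulting distributional identities are the ones in \eqref{e1.1}.
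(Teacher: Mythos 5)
Your proposal follows essentially the same route as the paper: reduce to a scalar viscous Burgers equation on the quarter plane via the Riemann-invariant level set, apply the Hopf--Cole transformation, quote the explicit path-minimization formula of \cite{j1,j4} for the vanishing-viscosity limit, and translate back to $(u,\sigma)$, checking the weak formulation by noting that on the level set the nonconservative products collapse to a single Burgers-type equation. The only difference is that you \emph{prove} invariance of the level set under the viscous flow by a maximum-principle argument for $w^{\epsilon}=\sigma^{\epsilon}+(-1)^{j}ku^{\epsilon}-c$, whereas the paper simply makes the ansatz $\sigma=(-1)^{j+1}ku+c$ and constructs a solution of that form; your version is a mild strengthening (modulo the a priori bounds on $u^{\epsilon}$ needed to make the drift locally bounded), but otherwise the two arguments coincide.
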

\begin{proof} We try to find a solution which
lies on the level set of the Riemann invariant satisfying the initial and boundary conditions.
When $\sigma=(-1)^{j+1} k u +c$, the system \eqref{e3.3}
become a single Burgers
equation
\[
 u_t +u u_x - (-1)^{j+1}ku_x =\epsilon u_{xx},x>0,t>0
\]
with initial  conditions
\begin{equation}
u(x,0)=u_0(x),\,\,\,x>0 
\label{e3.15}
\end{equation}

and boundary conditions
\begin{equation}
 u(0,t)=u_b(t), t>0.
\label{e3.16}
\end{equation}
A more general type of initial boundary value problem of
this type was studied by Bardos, LeRoux and Nedelec \cite{b1} and they
proved the convergence of the limit as $\epsilon$ goes to $0$ . They further showed that the limit satisfies the equation 
\[
 u_t +u u_x - (-1)^{j+1}ku_x =0 ,x>0,t>0 ,
\] 
the initial condition \eqref{e3.15} and a weak form of the boundary condition \eqref{e3.16} , namely 
\begin{equation}
u(0+, t) \in 
\mathcal{E}(u_b(t))=\begin{cases}
{u_b} \bigcup (- \infty , - u_b(t)) & \text{if } u_b(t) > 0 ,\\
(- \infty , 0 ] & \text{if } u_b(t) \leq  0.
\end{cases}
\end{equation}

 Here our aim is to
get the formula for the limit.  So
taking $v=u-(-1)^{j+1}k $, the equation can be written as
\[
 v_t +v v_x =\epsilon v_{xx}, x>0, t>0
\]
\[
v(x,0)=u_0(x)-(-1)^{j+1}k, x>0,\,\,\, v(0,t)=u_b(t)-(-1)^{j+1}k,t>0.
\]
Applying Hopf-Cole transformation  \cite{h1} 
\begin{equation*}
 v=-2\epsilon (\log w)_x,
\end{equation*}
the problem is reduced to
\begin{equation*}
 w_t=\epsilon w_{xx}
\end{equation*}
with initial conditions
\begin{equation*}
w(x,0)=e^{\frac{-1}{2\epsilon}(\int_0^x u_0(z)dz
-(-1)^{j+1}kx)}
\end{equation*}
and boundary conditions
\[
2\epsilon w_x(0,t)+(u_b(t)-(-1)^{j+1}k)w(0,t)=0.
\]
Existence of solution $w(x,t)$ for this problem is well known and from the 
transformations we get,
\begin{equation}
\begin{aligned}
&u^\epsilon(x,t)=v^\epsilon(x,t)+(-1)^{j+1}k =- 2 \epsilon \frac{w^\epsilon_x(x,t)}{w^\epsilon(x,t)}+(-1)^{j+1}k,\\
&\sigma^\epsilon(x,t)=(-1)^{j+1} k u^\epsilon(x,t) +c =(-1)^{j+1} k( - 2 \epsilon \frac{w^\epsilon_x(x,t)}{w^\epsilon(x,t)}+(-1)^{j+1}k ) +c.
\label{e3.18}
\end{aligned}
\end{equation}
The limit of $ p(x,t) =\lim_{\epsilon \rightarrow 0} \{- 2 \epsilon \frac{w^\epsilon_x(x,t)}{w^\epsilon(x,t)}\}$ exists as shown in \cite{b1}. Further it was shown in
\cite{j1,j4} that, with $A$ and $B$ as in the theorem
\begin{equation}
p(x,t) = \begin{cases}
 \frac{x-y(x,t)}{t},&\text{if } A(x,y(x,t),t)<B(x,y(x,t),t),\\
 \frac{x}{t-\tau_1(x,t)},&\text{if }  A(x,y(x,t),t)> B(x,y(x,t),t) ,
\end{cases}
\label{e3.19}
\end{equation}
The formula then follows from \eqref{e3.18} and \eqref{e3.19}.

For the completeness of the arguement,  we show that the limit satisfies the equation\eqref{e1.1}.
For this first we note that the system is equivalent to
\[
 u_t +u u_x - (-1)^{j+1}ku_x =0, x>0, t>0
\]
in weak sense, when $(u,\sigma)$ lies on one of the Riemann invariant. This follows easily because, if
$\sigma(x,t)=(-1)^{j+1}k u(x,t) +c$,
\[
\begin{aligned}
&u_t+uu_x-\sigma_x = u_t +u u_x - (-1)^{j+1}ku_x\\
&\sigma_t+u\sigma_x-k^2 u_x = (-1)^{j+1}\{u_t +u u_x - (-1)^{j+1}ku_x\}.
\end{aligned}
\]
Now since $u^\epsilon$ is smooth it satisfies, 
\[
\begin{aligned}
 -\int_0^\infty \int_0^\infty (u^\epsilon (x,t) \phi_t(x,t)  &+ (\frac{u^\epsilon(x,t)^2}{2} - (-1)^{j+1}k u^\epsilon(x,t))\phi_x(x,t))dx dt \\ = \epsilon \int_0^\infty \int_0^\infty u^\epsilon \phi_{xx} dx dt
\end{aligned}
\]
for any smooth function with compact support in $x>0,t>0$.

Since $u_0$ is bounded measurable and $u_b$ is continuous, by the maximum principle we have that,  $u^\epsilon$ is bounded in $[0,\infty) \times [0,T]$ and pointwise convergent almost everywhere. So by passing to the limit as $\epsilon$ tends to zero and
using  the Dominated convergence theorem , we get
\[
-\int_0^\infty \int_0^\infty (u (x,t) \phi_t(x,t)  +  (\frac{u(x,t)^2}{2} - (-1)^{j+1}k u(x,t))\phi_x(x,t))dx dt = 0.
\]
This completes the proof.
\end{proof}

{\bf REMARK:}  The limit in the theorem satisfies the initial conditions 
\begin{equation*}
\begin{aligned}
u(x,0)=u_0(x),\,\,\,\sigma(x,0)=\sigma_0(x),\,\,\, x>0
\end{aligned}
\end{equation*} 
but  the boundary conditions at $x=0$,
\begin{equation*}
\begin{aligned}
u(0,t)=u_b(t),\,\,\,\sigma(0,t)=\rho_b(t).
\end{aligned}
\end{equation*}
is not satified in the strong sense. Indeed
with strong form of Dirichlet boundary conditions, there is
neither existence nor uniqueness as the speed of
propagation
$\lambda_j(u,\sigma) =u+(-1)^jk$ does not have
a definite sign at the boundary $x=0$. We note that the
speed is
completely determined by the first equation. However the boundary conditions
are satisfied in the sense of
Bardos Leroux and Nedelec \cite{b1} 
which for our case is
equivalent to the following condition, see LeFloch
\cite{le1}
\begin{equation*}
\begin{gathered}
\text{either } u(0+,t) -(-1)^{j+1}k= (u_{b}(t)-(-1)^{j+1}k )^{+}\\
\text{or } u(0+,t) -(-1)^{j+1}k\leq 0 \text{ and }
(u(0+,t)-(-1)^{j+1}k)^2\leq [(u_{b}(t) -(-1)^{j+1}k)^{+}]^2 
\end{gathered}
\end{equation*}
where
$u_{b}^{+}(t)= \max\{u_b(t),0\}$.

Also the solution satisfies the entropy condition $u(x-,t)\geq u(x+,t)$ which follows from the formula derived in the theorem.

\section{Initial boundary value problem with Riemann type boundary conditions}

In this section, we find an explicit formula for global
solution of
\eqref{e1.1} when the initial data \eqref{e1.2} and the boundary
data \eqref{e1.3} are of Riemann type.

We have the following formula for the vanishing viscosity
limit.

\begin{theorem}
Let $u^\epsilon$ and $\sigma^\epsilon$ are given by 
\eqref{e3.18}, with $u_0,\sigma_0,u_b,\sigma_b$
all constants and lie on the level sets of the $j$
Riemann invariants then $u(x,t)= \lim_{\epsilon
\rightarrow 0} u^\epsilon(x,t)$
exists pointwise a.e. and $\sigma(x,t)= lim_{\epsilon
\rightarrow 0}
\sigma^\epsilon(x,t)$, in the sense of distributions and
$(u,\sigma)$
have the following form.

{Case 1: $u_0 -(-1)^{j+1}k =u_b -(-1)^{j+1}k >0$,} 
\[
(u(x,t),\sigma(x,t)) =
(u_0-(-1)^{j+1}k,k[-(-1)^{j+1}k+u_0]+c) ,\,\,\,if \,\,\,x
>0.
\]

{Case 2: $u_0 -(-1)^{j+1}k=u_b -(-1)^{j+1}k<0$,}
\[
(u(x,t),\sigma(x,t) =
(u_0-(-1)^{j+1}k,k[-(-1)^{j+1}k+u_0]+c).
\]

{Case 3: $0<u_b -(-1)^{j+1}k<u_0-(-1)^{j+1}k$,}
\[
(u(x,t),\sigma(x,t)) = \begin{cases} \displaystyle
{(u_b-(-1)^{j+1}k,k[-(-1)^{j+1}k+u_b]+c),\,\,\, if
\,\,\,x<u_b t}
\\\displaystyle
{ (x/t-(-1)^{j+1}k, k( x/t-(-1)^{j+1}k )+c    ),\,\,\, if \,\,\,u_b t< x<u_0
t}\\\displaystyle{(u_0- (-1)^{j+1}k,k[-(-1)^{j+1}k+u_0]+c),\,\,\, if \,\,\,
x>u_0 t}.
\end{cases}
\]
{Case 4:  $u_b -(-1)^{j+1}k<0<u_0-(-1)^{j+1}k$,}
\[
(u(x,t),\sigma(x,t)) = \begin{cases} \displaystyle
{(x/t-(-1)^{j+1}k, k[-(-1)^{j+1}k+ x/t]+c),\,\,\, if
\,\,\,0< x<u_0 t}
\\\displaystyle
{(u_0- (-1)^{j+1}k,k[u_0- (-1)^{j+1}k]+c),\,\,\, if \,\,\,x>u_0 t}.\end{cases}
\]
{Case 5: $u_b-(-1)^{j+1}k<0$ and $u_0-(-1)^{j+1}k\leq 0$,}
\[
(u(x,t),\sigma(x,t)) = (u_0 -(-1)^{j+1}k,k[-(-1)^{j+1}k+u_0]+c).
\]
{Case 6: $u_0-(-1)^{j+1}k<u_b-(-1)^{j+1}k$ and $(u_b-(-1)^{j+1}k)+(u_0 -(-1)^{j+1}k)>0$ }
\[
(u(x,t),\sigma(x,t)) = \begin{cases} \displaystyle
{(u_b- (-1)^{j+1}k,k[-(-1)^{j+1}k+u_b]+c),\,\,\, if
\,\,\,x<st}
\\\displaystyle
{ (u_b-(-1)^{j+1}k,k[-(-1)^{j+1}k+u_b]+c),\,\,\, if
\,\,\,x>st},
\end{cases}
\]
where $s=\frac{u_0 +u_b }{2}-(-1)^{j+1}k$. This limit
satisfies the equation
\eqref{e1.1} and the initial condition \eqref{e1.2}.
\end{theorem}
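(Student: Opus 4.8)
The plan is to reduce the theorem to the scalar Burgers equation exactly as in the proof of the previous theorem, and then to solve the resulting Riemann-type initial boundary value problem explicitly. Since the data lie on the level set $\{\sigma+(-1)^{j}ku=c\}$ of the $j$-Riemann invariant, the regularized pair satisfies $\sigma^{\epsilon}=(-1)^{j+1}ku^{\epsilon}+c$, and $v^{\epsilon}:=u^{\epsilon}-(-1)^{j+1}k$ solves the viscous Burgers equation $v^{\epsilon}_{t}+v^{\epsilon}v^{\epsilon}_{x}=\epsilon v^{\epsilon}_{xx}$ in $x>0,\,t>0$ with the \emph{constant} initial datum $v_{0}=u_{0}-(-1)^{j+1}k$ and the \emph{constant} boundary datum $v_{b}=u_{b}-(-1)^{j+1}k$. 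By \cite{b1} the limit $v=\lim_{\epsilon\to0}v^{\epsilon}$ exists pointwise a.e.\ and is the unique solution of the inviscid Burgers equation in $x>0,\,t>0$ attaining the initial value $v_{0}$ and satisfying the weak (Bardos--LeRoux--N\'ed\'elec) boundary condition at $x=0$ recalled in the Remark following the previous theorem. Since $u=v+(-1)^{j+1}k$ and $\sigma=(-1)^{j+1}ku+c$, it is enough to identify $v$ in each of the sign regimes of the pair $(v_{0},v_{b})$ listed in the statement.

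I would do this in either of two equivalent ways. The direct route is to write down the natural candidate entropy solution in each regime --- the constant state $v\equiv v_{0}$ when $v_{0}=v_{b}$ (of either sign) and also when $v_{b}<0$ and $v_{0}\le0$; the centered rarefaction joining $v_{b}$ on $x<v_{b}t$ to $v_{0}$ on $x>v_{0}t$ when $0<v_{b}<v_{0}$; the transonic rarefaction equal to $x/t$ on $0<x<v_{0}t$ and to $v_{0}$ beyond when $v_{b}<0<v_{0}$; and the single admissible shock of speed $s=\tfrac12(v_{0}+v_{b})>0$ separating $v_{b}$ on the left from $v_{0}$ on the right when $v_{0}<v_{b}$ and $v_{0}+v_{b}>0$ --- and then to verify, for each candidate, that it is a weak solution of $v_{t}+vv_{x}=0$ in $x>0,\,t>0$ (Rankine--Hugoniot across the shock, continuity of the fans), that it attains $v_{0}$ at $t=0$, and that it satisfies the Oleinik entropy condition together with the boundary relation of the Remark at $x=0$. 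By the uniqueness part of \cite{b1} (equivalently \cite{le1}) each candidate then coincides with $v$. The more mechanical route is to evaluate the variational formula \eqref{e3.10}: since $u_{0}$ is constant one has $U_{0}(y)=v_{0}y$, $A(x,y,t)=(x-y)^{2}/(2t)$, and $B(x,y,t)$ reduces to $\min\{-\tfrac12(v_{b}^{+})^{2}(\tau_{2}-\tau_{1})+\tfrac{y^{2}}{2\tau_{1}}+\tfrac{x^{2}}{2(t-\tau_{2})}:0\le\tau_{1}<\tau_{2}<t\}$; the inner minimizations are elementary calculus (on the $A$-branch $y=x-v_{0}t$; on the $B$-branch $\tau_{1}=y/v_{b}^{+}$, $\tau_{2}=t-x/v_{b}^{+}$ where these are feasible), and substituting the resulting minimizer into \eqref{e3.13}--\eqref{e3.14} reproduces the six cases, the boundaries between the cases coming from the feasibility constraints $y\ge0$ and $0\le\tau_{1}<\tau_{2}<t$.

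Translating back through $u=v+(-1)^{j+1}k$ and $\sigma=(-1)^{j+1}ku+c$ then gives the stated expressions for $(u,\sigma)$; that this limit is a weak solution of the full system \eqref{e1.1} satisfying the initial condition \eqref{e1.2} is immediate from the argument already used in the proof of the previous theorem, namely that on a level set of a Riemann invariant \eqref{e1.1} is weakly equivalent to the scalar conservation law for $v$, together with the uniform bound on $v^{\epsilon}$ from the maximum principle and dominated convergence. The main obstacle is the bookkeeping of the case distinctions: deciding in each regime whether the boundary trace of $v$ equals $v_{b}$, equals $0$, or is unconstrained, according to the Bardos--LeRoux--N\'ed\'elec condition, and pinning down the threshold $v_{0}+v_{b}>0$ that separates a rarefaction absorbed at the boundary from a shock that propagates into $x>0$; along the variational route the corresponding difficulty is handling the constrained minimizations whose minimizer lies on the boundary $\{y=0\}$ or $\{\tau_{1}=0\}$ of the admissible region. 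Everything else --- Rankine--Hugoniot, continuity of the rarefactions, and the passage to the limit --- is routine.
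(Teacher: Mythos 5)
Your proposal is correct, and your second (variational) route is essentially the paper's own proof: the paper sets $\bar{u_b}=u_b-(-1)^{j+1}k$, $\bar{u_0}=u_0-(-1)^{j+1}k$, minimizes $f(\tau_1,\tau_2)=\frac{x^2}{2(t-\tau_1)}+\frac{y^2}{2\tau_2}-\frac12\bar{u_b}^2(\tau_1-\tau_2)$ by first-derivative conditions (interior critical point $\tau_1=t-x/\bar{u_b}$, $\tau_2=y/\bar{u_b}$ when feasible, otherwise a boundary minimizer with $\tau_1=\tau_2=y=0$), then minimizes over $y$ and compares $A_{\min}$ with $B_{\min}$ region by region to obtain $U(x,t)$ and hence $p(x,t)$ via \eqref{e3.13}--\eqref{e3.14}, exactly as you outline. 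Your first route --- writing down the candidate constant, rarefaction, and shock profiles and invoking uniqueness under the Bardos--LeRoux--N\'ed\'elec boundary condition --- is a legitimate alternative that the paper does not pursue; it replaces the constrained-minimization bookkeeping with verification of Rankine--Hugoniot, the Oleinik condition, and the boundary entropy relation, and either argument suffices.
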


\begin{proof}: 
For simplicity , we call $u_b -(-1)^{j+1}k = \bar{u_b},$ $u_0 -(-1)^{j+1}k = \bar{u_0}.$
Fix $(x,y,t)$, $x\geq 0, y\geq 0, t\geq 0$ . For $0\leq \tau_2\leq \tau_1<t$, 
consider the function 
\[
f(\tau_1,\tau_2) =\frac{x^2}{2(t-\tau_1)} +\frac{y^2}{2 \tau_2}-\frac{1}{2}{\bar{u_b}}^2(\tau_1 -\tau_2).
\]
We find the minimum with respect to $\tau_1$ and $\tau_2$. Note that as $\tau_1$ goes to $t$ or $\tau_2$ goes to $0$, $f$
goes to infinity if $x>0$ and $y>0$ respectively. So for this case the minimum is inside $0 <\tau_2\leq \tau_1<t$. 
Taking first derivative condition for mimnimum, we get
\[
\frac{\partial f}{\partial {\tau_1}}=\frac{x^2}{2(t-\tau_1)^2}-\frac{1}{2}{\bar{u_b}}^{2}=0,\,\,\,\frac{\partial f}{\partial \tau_2 }=-\frac{y^2}{\tau_2}^2
+\frac{1}{2} {\bar{u_b}}^2=0
\]
Solving this we get $\tau_1 =t-\frac{x}{\bar{u_b}}$,$ \tau_2 = \frac{y}{\bar{u_b}}$. For this we require $\bar{u_b} >0$ and $x<\bar{u_b}  t$

Case $0<\bar{u_b}<\bar{u_0}$ :
We analysis different regions . In the region $ 0<x<\bar{u_b} t$ , we have minimum point is  at $(\tau_1,\tau_2) = (t-\frac{x}{\bar{u_b}},\frac{y}{\bar{u_b}} )$ and so
\[
\begin{aligned}
B(x,y,t)&=\frac{x^2}{2(t-\tau_1)} +\frac{y^2}{2 \tau_2}-\frac{1}{2}{\bar{u_b}}^2(\tau_1 -\tau_2)\\
             &=\frac{x}{2} \bar{u_b} +\frac{1}{2} y \bar{u_b} -\frac{1}{2}{u_b}^2(t-\frac{x}{\bar{u_b}}-\frac{y}{\bar{u_b}})\\
             &=\bar{u_b}(x+y) -\frac{1}{2}\bar{u_b}^2 t
\end{aligned}
\] 
Now consider 
\[
A_{min}(x,t)=\min_{y\geq 0}\{\bar{u_0} y + \frac{(x-y)^2}{2 t}\}.
\] 
Denote $g(y)$ the function inside the bracket , the $g'(y)=\bar{u_0} -\frac{x-y}{t} <0 $ in the region $x-\bar{u_b} t <0$
so 
\[
A_{min}(x,t) =\frac{x^2}{2t}.
\]. Let
\[
B_{min}(x,t)=\min_{y \geq 0}\{\bar{u_0} y +\bar{u_b}(x+y) -\frac{1}{2} \bar{u_b}^2 t\}
\]
Since the function inside the bracket is an increasing function of $y$, minimum is achieved for $y=0$, so that
\[
B_{min}(x,t)= \bar{u_b} x -\frac{1}{2} \bar{u_b}^2 t
\]
Now $A_{min}(x,t)-B_{min}(x,t)=\frac{(x-\bar{u_b} t)^2}{2t}>0$
so we have
\[
U(x,t)=\min\{(A_{min}(x,t),B_{min}(x,t)\}= \bar{u_b} x -\frac{1}{2} \bar{u_b}^2 t
\]
Now consider the region $\bar{u_b} t <x< \bar{u_0} t$. In this region
\[
\begin{aligned}
\frac{\partial f}{\partial \tau_1} &=\frac{x^2}{2(t-\tau_1)^2} -\frac{1}{2} \bar{u_b}^2\\
&=\frac{1}{2}(\frac{x}{t-\tau_1}+\bar{u_b})(\frac{x- \bar{u_b} (t-\tau_1)}{t-\tau_1})
\end{aligned}
\]
which is positive. So as a function of $\tau_1$ $f$ is an increasing function and so its minimum is achived at $\tau_1 =0$. This means that $B_{min}(x,t)$ is achived at
$\tau_1=\tau_2=y=0$, so that
\[
B_{min}(x,t) =\frac{x^2}{2t}.
\]
As in previous case
\[
A(x,t)=\frac{x^2}{2t}.
\]
so that
\[
U(x,t)=\frac{x^2}{2t}.
\]
In the region $x>u_0 t$, as before
\[
B_{min}(x,t) =\frac{x^2}{2t}.
\]
From the first derivative condition, it is easy to see
that minimum in $A_{min}(x,t)$ is achieved for $y=x - \bar{u_0} t$ and so
\[
A_{min}(x,t)=\bar{u_0} x -\frac{1}{2}\bar{u_0}^2 t
\]
But $A_{min}(x,t)-B_{min}(x,t)= - \frac{(x- \bar{u_0} t)^2}{2t}<0$ and we get
\[
U(x,t)=\min\{(A(x,t),B(x,t)\}= \bar{u_0} x -\frac{1}{2} {\bar{u_0}}^2 t
\]

Case : $0<\bar{u_0}<\bar{u_b}$
As in the previous analysis, it follows that
\[
A_{min}(x,t)=\begin{cases}\bar{u_0} x -\frac{\bar{u_0}^2 t}{2},\,\,\,x>\bar{u_0} t\\
             \frac{x^2}{2 t},\,\,\, x<\bar{u_0} t 
\end{cases}
\]
and
\[
B_{min}(x,t)=\begin{cases}\bar{u_b} x -\frac{\bar{u_b}^2 t}{2},\,\,\,x<\bar{u_b} t\\
   \frac{x^2}{2 t},\,\,\, x>\bar{u_b} t 
\end{cases}
\]
Same analysis as before gives
\[
U(x,t)=\begin{cases}\bar{u_b} x -\frac{\bar{u_b}^2 t}{2},\,\,\,x<\bar{u_b} t\\
   u_0 x -\frac{\bar{u_0}^2 t}{2},\,\,\, x>\bar{u_0} t 
\end{cases}
\]
Now if $\bar{u_0} t <x<\bar{u_b} t$,
\[
\begin{aligned}
A_{min}(x,t)-B_{min}(x,t)= &=\bar{u_0} x -\frac{\bar{u_0}^2 t}{2}
-(\bar{u_b} x -\frac{\bar{u_0}^2 t}{2})\\
= (\bar{u_0} -\bar{u_b} )(x-\frac{1}{2}(\bar{u_0} +\bar{u_b})t)
\end{aligned}
\]
The right hand side is positive if 
$x<\frac{1}{2}(\bar{u_0} +\bar{u_b})t$ and negative if
$x>\frac{1}{2}(\bar{u_0}+\bar{u_b})t$ and we get
\[
U(x,t)=\begin{cases}\bar{u_b} x -\frac{\bar{u_b}^2 t}{2},\,\,\,u_0 t<x<\frac{\bar{u_0}+\bar{u_b}}{2} t\\
             u_0 x -\frac{\bar{u_0}^2 t}{2},\,\,\,
             \frac{\bar{u_0}+\bar{u_b}}{2} t <x< \bar{u_b} t 
\end{cases}
\]
Case $\bar{u_b} \leq 0$ . In this case 
\[
B(x,y,t)=\min_{0<\tau_2<\tau_1<t} \frac{x^2}{2(t-\tau_1)}
+\frac{y^2}{2 \tau_2}
\]
Here $\frac{\partial f}{\partial \tau_1}=\frac{x^2}{2(t-\tau_1)}>0$. So minimum is achived in $B_{min}(x,t)$
at $\tau_1=\tau_2=y=0$ so that $B_{min}(x,t)=\frac{x^2}{2 t}$. Now if $\bar{u_0}>0$,
\[
A_{min}(x,t) =\begin{cases} \frac{x^2}{2t}, \,\,\, x<\bar{u_0} t \\
 \bar{u_0} x -\frac{\bar{u_0}^2}{2} t,\,\,\, x>\bar{u_0} t.
\end{cases}
\]
so that if $\bar{u_0}>0, \bar{u_b}\leq0$, we have
\[
U(x,t) =\begin{cases} \frac{x^2}{2t},\,\,\,x>\bar{u_0} t\\
\bar{u_0} x-\frac{\bar{u_0}^2}{2}t,\,\,\,x<\bar{u_0} t
\end{cases}
\]
If $\bar{u_0}<0, \bar{u_b}\leq 0$, the minmum in $A_{min}(x,t)$ is
achieved at $y=x- \bar{u_0} t.$
 $A_{min}(x,t)=\bar{u_0} x -\frac{\bar{u_0}^2}{2}t$ 
and as before $B_{min}(x,t)=\frac{x^2}{2t}$, a we get
\[
U(x,t)=\bar{u_0} x -\frac{ \bar{u_0}^2}{2}t
\]
\end{proof}
This completes the proof.

\end{document}